\theoremstyle{definition}
\newtheorem{theo}{Theorem}[section]
\newtheorem{defi}[theo]{Definition}
\newtheorem{prop}[theo]{Proposition}
\newtheorem{exa}[theo]{Example}
\newtheorem{rem}[theo]{Remark}
\newtheorem{nota}[theo]{Notation}
\newtheorem{OpenP}[theo]{Open Problem}
\numberwithin{equation}{section}
\newcommand{\N}{{\mathbb N}}
\newcommand{\F}{{\mathbb F}}
\newcommand{\cC}{{\mathcal C}}
\newcommand{\cD}{{\mathcal D}}
\newcommand{\cF}{{\mathcal F}}
\newcommand{\cI}{{\mathcal I}}
\newcommand{\cM}{{\mathcal M}}
\newcommand{\cO}{{\mathcal O}}
\newcommand{\cL}{{\mathcal L}}
\newcommand{\cS}{{\mathcal S}}
\newcommand{\cN}{{\mathcal N}}
\newcommand{\cU}{{\mathcal U}}
\newcommand{\cV}{{\mathcal V}}
\newcommand{\cX}{{\mathcal X}}
\newcommand{\cZ}{{\mathcal Z}}
\renewcommand{\cdotp}{\raisebox{-.17ex}{$\boldsymbol{\cdot}$}}
\newcommand{\qM}{$q$-matroid}
\newcommand{\T}{\mbox{$^{\sf T}$}}
\newcommand{\subspace}[1]{\mbox{$\langle{#1}\rangle$}}
\newcommand{\rk}{{\rm rk}\,}
\newcommand{\GL}{{\rm GL}}
\newcommand{\rowsp}{\mbox{\rm rs}}
\newcommand{\supp}{\mbox{\rm supp}}
\newcommand{\wtrk}{\mbox{$\mbox{\rm wt}_{\mbox{\rm\footnotesize rk}}$}}
\newcommand{\Smalltwomat}[2]{\mbox{$\left(\begin{smallmatrix}{#1}\\[.4ex]{#2}\end{smallmatrix}\right)$}}
\newcommand{\Smallfourmat}[4]{\mbox{$\left(\begin{smallmatrix}{#1}&{#2}\\[.4ex]{#3}&{#4}\end{smallmatrix}\right)$}}
\newcounter{alp}
\newcounter{ara}
\newcounter{rom}
\newenvironment{alphalist}{\begin{list}{(\alph{alp})\hfill}{\usecounter{alp}
     \topsep-1.4ex \labelwidth.7cm \leftmargin.7cm \labelsep0cm
     \rightmargin0cm \parsep0ex \itemsep.5ex
     \partopsep-1.4ex}}{\end{list}}
\newenvironment{mylist2}{\begin{list}{(\arabic{ara})\hfill}{\usecounter{ara}
     \topsep-1ex \labelwidth.88cm \leftmargin.88cm \labelsep0cm
     \rightmargin0cm \parsep0ex \itemsep.5ex
     \partopsep-1ex}}{\end{list}}
\let\@fnsymbol\@arabic
\begin{document}

\title{Representability of the Direct Sum of $q$-Matroids}
\author{Heide Gluesing-Luerssen\thanks{Department of Mathematics, University of Kentucky, Lexington KY 40506-0027, USA; heide.gl@uky.edu and benjamin.jany@uky.edu. HGL was partially supported by the grant \#422479 from the Simons Foundation.}\quad and Benjamin Jany\footnotemark[1]
}

\date{February 22, 2023}
\maketitle
	
\begin{abstract}
\noindent 
While there are many parallels between matroid theory and $q$-matroid theory, most notably on the level of cryptomorphisms,
there are substantial differences when it comes to the direct sum.
The direct sum of $q$-matroids has been introduced in the literature only recently.
In this short note we show that the direct sum of representable $q$-matroids may not be representable.
It remains an open question whether representability of the direct sum can be characterized by the given $q$-matroids.
\end{abstract}

\textbf{Keywords:} $q$-matroid, direct sum, representability.

\section{Introduction}\label{S-Intro}

A $q$-matroid is the $q$-analogue of a matroid.
The ground space is a  finite-dimensional vector space over some finite field and the rank function is defined on the subspace lattice of that ground space. It satisfies the natural analogues of the rank function of a matroid.
$q$-Matroids have garnered a lot of attention in the coding-theory community due to their close relation to rank-metric codes; we refer to
\cite{JuPe18,Shi19,GJLR19,GhJo20,BCIJ21,GLJ22Gen,JPV22} for further details.

The notion of a $q$-matroid has only been introduced in 2018 in \cite{JuPe18}, and its theory is still in the beginnings.
Fortunately, a long catalogue of cryptomorphisms has already been established in \cite{BCJ22}.
It tells us that $q$-matroids can be defined by any of its abundance of structures, such as independent spaces, bases, circuits, flats, etc, and
provides us with  $q$-analogues of the axioms that need to be satisfied.
More recently, an additional cryptomorphism, based on cyclic flats, has been derived in~\cite{AlBy22}.

Despite this extensive list of cryptomorphisms, it turns out that the notion of direct sum for $q$-matroids is far from obvious.
None of the many ways of defining the direct sum of classical matroids has a well-defined $q$-analogue (see \cref{R-DirSumClassMatr} for further details).
Furthermore, in \cite{GLJ21C} it has been shown that there is no coproduct in the category of $q$-matroids with strong maps as morphisms.
This stands in contrast to the matroid case and suggests that a direct sum of $q$-matroids will not behave well with respect to its flats.
Nonetheless, just recently a notion of direct sum of $q$-matroids has been introduced in~\cite{CeJu21} by way of the rank function.
In the same paper some essential properties of the direct sum have been established, and in \cite{GLJ21C}
it has been shown that it is a coproduct in the category of $q$-matroids with linear weak maps as morphisms.
The latter implies that the direct sum of $q$-matroids on ground spaces~$E_1$ and~$E_2$  is the unique $q$-matroid
with the most independent spaces among all $q$-matroids on $E_1\oplus E_2$  whose restrictions to~$E_i$ are isomorphic to the given
$q$-matroids.
All these results suggest that the proposed notion of direct sum is the appropriate one, and it remains to investigate its properties.

In this short note we will discuss representability of the direct sum.
For a $q$-matroid whose ground space is an $\F_q$-vector space, representability can be naturally defined over any field extension
of~$\F_q$; see \cref{T-ReprqMatr} below.
In contrast to the representation of classical matroids, this fixes a priori the characteristic of the `representing field', and the only free parameter is
the degree of the field extension.
So far, very little is known about representability of \qM{}s (such as any obstruction criteria).

In this short paper we show the somewhat surprising fact that the direct sum of two $\F_{q^m}$-represent\-able $q$-matroids may
not be $\F_{q^m}$-representable or, even worse, not be representable over any field extension of~$\F_q$.
In order to present examples we will derive the following results. Firstly, if the direct sum is representable then it is so by a block diagonal matrix whose blocks represent the summands (\cref{T-DSReprMat}).
Secondly, we introduce the notion of paving $q$-matroids as the natural $q$-analogue of paving matroids.
For representable paving $q$-matroids of the same rank we derive a condition on the representing matrices
that prevents the block diagonal matrix to represent the direct sum (\cref{T-SuppCPerp}).
This will allow us to construct the desired examples.
It remains an open question whether representability of the direct sum can in general be characterized in terms of the summands.

\textbf{Notation:} Throughout, let $\F=\F_q$.
For any finite-dimensional $\F$-vector space~$E$ we denote by $\cL(E)$  the subspace lattice of~$E$.
For a matrix $M\in\F_{q^a}^{k\times n}$ we use $\rowsp(M)\subseteq\F_{q^a}^n$ for the row space of~$M$.
The abbreviation RREF stands for reduced row echelon form.
We set $\F^{\cdotp\times n}:=\bigcup_{y=1}^n\F^{y\times n}$, that is, $\F^{\cdotp\times n}$ is the set of all matrices over~$\F$ with $n$ columns and at most~$n$ rows. The collection of all their row spaces is $\cL(\F^n)$.

\section{The Direct Sum of $q$-Matroids}

\begin{defi}\label{D-qMatroid}
A \emph{$q$-matroid with ground space~$E$} is a pair $\cM=(E,\rho)$, where~$E$ is a finite-dimensional $\F$-vector space
and $\rho: \cL(E)\longrightarrow\N_{\geq0}$ is a map satisfying
\begin{mylist2}
\item[(R1)\hfill] Dimension-Boundedness: $0\leq\rho(V)\leq \dim V$  for all $V\in\cL(E)$;
\item[(R2)\hfill] Monotonicity: $V\leq W\Longrightarrow \rho(V)\leq \rho(W)$  for all $V,W\in\cL(E)$;
\item[(R3)\hfill] Submodularity: $\rho(V+W)+\rho(V\cap W)\leq \rho(V)+\rho(W)$ for all $V,W\in\cL(E)$.
\end{mylist2}
The value $\rho(V)$ is called the \emph{rank of}~$V$ and $\rho(\cM):=\rho(E)$ the \emph{rank} of the \qM.
A subspace $V\in\cL(E)$ is \emph{independent} if $\rho(V)=\dim V$ and \emph{dependent} otherwise.
The dependent spaces all of whose proper subspaces are independent are called \emph{circuits}.
A space $V\in\cL(E)$ is a \emph{flat} if it is inclusion-maximal in the set $\{W\in\cL(E)\mid \rho(W)=\rho(V)\}$, and a space is
\emph{open} if it is the sum of circuits.
\end{defi}

\begin{rem}\label{R-Crypto}
It is well known (see \cite[Thm.~8]{JuPe18})  that the collection of independent spaces uniquely determines the $q$-matroid, and the same is
true for the collections of dependent spaces, open spaces, and circuits.
For this and many more cryptomorphisms for $q$-matroids we refer to \cite{BCJ22}.
Furthermore, together with their rank values the cyclic flats also uniquely determine the $q$-matroid; see \cite{AlBy22,GLJ23DSCyc} and specifically~\cite{AlBy22} for the resulting cryptomorphism.
\end{rem}

An abundance of $q$-matroids is obtained as follows. Recall that $\rowsp(\cdot)$ denotes the row space of a matrix.

\begin{theo}[\mbox{\cite[Sec.~5]{JuPe18}}]\label{T-ReprqMatr}
Let $\F_{q^m}$ be a field extension of $\F=\F_q$ and let $G\in\F_{q^m}^{k\times n}$.
Define the map $\rho:\cL(\F^n)\longrightarrow\N_0$ via
\[
    \rho(V)=\rk(G Y\T),\ \text{ where $Y\in\F^{\cdotp\times n}$ is such that $V=\rowsp(Y)$}
\]
(and where the rank is taken over $\F_{q^m}$).
Then~$\rho$ is well-defined and $\cM_G:=(\F^n,\rho)$ is a $q$-matroid. It is called the $q$-matroid \emph{represented by $G$}.
A $q$-matroid $\cM$ with ground space~$\F^n$ is called \emph{$\F_{q^m}$-representable} if $\cM=\cM_G$ for some matrix~$G\in\F_{q^m}^{\:\cdotp\times n}$ (whose rank equals the rank of~$\cM$).
The $q$-matroid $\cM$ is called \emph{representable} if it is $\F_{q^m}$-representable over some field extension~$\F_{q^m}$.
\end{theo}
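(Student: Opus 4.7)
The plan is to reformulate $\rho$ in a coordinate-free way so that all three axioms become transparent. Let $\Phi:\F_{q^m}^n \to \F_{q^m}^k$ be the $\F_{q^m}$-linear map $v\mapsto Gv\T$, and for each $V\in\cL(\F^n)$ let $\widetilde V \leq \F_{q^m}^n$ denote its $\F_{q^m}$-span, i.e., the scalar extension $V\otimes_{\F}\F_{q^m}$. If $Y\in\F^{\cdotp\times n}$ has row space $V$ and full row rank, then its rows form an $\F_{q^m}$-basis of $\widetilde V$, and the column span of $GY\T$ is precisely $\Phi(\widetilde V)$. Hence the key reformulation is
\[
   \rho(V) = \rk(GY\T) = \dim_{\F_{q^m}}\Phi(\widetilde V),
\]
a quantity that manifestly depends only on $V$.

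For well-definedness: any two matrices $Y_1,Y_2\in\F^{\cdotp\times n}$ with common row space $V$ satisfy $Y_2=BY_1$ and $Y_1=CY_2$ for matrices $B,C$ over $\F$, so $GY_2\T = GY_1\T B\T$ gives $\rk(GY_2\T)\leq\rk(GY_1\T)$, and symmetry gives equality. For (R1), the reformulation yields $0\leq \dim\Phi(\widetilde V)\leq\dim\widetilde V = \dim V$ immediately. For (R2), $V\leq W$ implies $\widetilde V \leq \widetilde W$, hence $\Phi(\widetilde V)\leq\Phi(\widetilde W)$ and the dimensions are ordered.

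The main step is submodularity (R3), and here I would combine two ingredients. First, the compatibility of scalar extension with sums and intersections: $\widetilde{V+W}=\widetilde V+\widetilde W$ (trivial) and $\widetilde{V\cap W}=\widetilde V\cap\widetilde W$ (flatness of the field extension $\F_{q^m}/\F$, or a direct basis-extension argument obtained by extending an $\F$-basis of $V\cap W$ to bases of $V$ and $W$ and checking $\F_{q^m}$-linear independence). Second, the standard image inequality for any linear map $\Phi$ and subspaces $A,B$ of its domain,
\[
   \dim\Phi(A+B)+\dim\Phi(A\cap B)\leq \dim\Phi(A)+\dim\Phi(B),
\]
which follows from $\Phi(A\cap B)\subseteq \Phi(A)\cap\Phi(B)$ together with the sum–intersection dimension formula applied to $\Phi(A)+\Phi(B)=\Phi(A+B)$. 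Feeding the compatibility into the inequality with $A=\widetilde V$, $B=\widetilde W$ yields (R3). The only nontrivial point is the intersection compatibility $\widetilde{V\cap W}=\widetilde V\cap\widetilde W$; once that is in hand, no further obstacle remains.
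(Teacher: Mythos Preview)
Your proof is correct. Note, however, that the paper does not supply its own proof of this statement: it is quoted from \cite[Sec.~5]{JuPe18} and treated as background. So there is nothing in the present paper to compare your argument against.

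That said, your approach is the standard and cleanest one: recasting $\rho(V)$ as $\dim_{\F_{q^m}}\Phi(\widetilde V)$ makes (R1) and (R2) immediate, and reduces (R3) to the general submodularity of $A\mapsto\dim\Phi(A)$ together with the compatibility of scalar extension with sums and intersections. The one point you flag as nontrivial, $\widetilde{V\cap W}=\widetilde V\cap\widetilde W$, is indeed the only place requiring care; your basis-extension sketch (extend an $\F$-basis of $V\cap W$ to bases of $V$ and of $W$, observe the union is $\F$-independent, hence $\F_{q^m}$-independent) is exactly the right justification and completes the argument without invoking flatness in the abstract.
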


In short, for a representable \qM{} $\cM_G=(\F^n,\rho)$ with $G\in\F_{q^m}^{\:\cdotp\times n}$  we have
\begin{equation}\label{e-rkGY}
     \rho(\rowsp(Y))=\rk(GY\T)\ \text{ for all matrices $Y\in\F^{\cdotp\times n}$}.
\end{equation}
Clearly, $\cM_G$ depends only on the row space of~$G$; in other word, $\cM_{G}=\cM_{UG}$ for all $U\in\GL_k(\F_{q^m})$.
In the context of the rank-metric on $\F_{q^m}^n$ the row space of~$G$ is known as the
\emph{rank-metric code generated by~$G$}. Many invariants of the code (but not all) can be determined by the associated $q$-matroid.
For further details we refer to \cite{JuPe18,GJLR19,BCIJ21,GLJ22Gen} and Section~6 in the survey \cite{Gor21}.
We will briefly touch on the relation to rank-metric codes in \cref{D-Supp} and \cref{T-SuppCPerp}.
Furthermore, representability of the uniform \qM{}s can easily be described with the aid of maximum rank-distance codes (MRD codes).

\begin{exa}\label{E-UnifRepr}
Let $\cU_{k,n}(q)$ be the \emph{uniform \qM{} of rank~$k$} with ground space~$\F_q^n$; that is, the rank function is given by
$\rho(V)=\min\{k,\dim V\}$ for all $V\in\cL(\F_q^n)$.
Then $\cU_{0,n}(q)$ and $\cU_{n,n}(q)$ are representable over~$\F_q$. Precisely,  $\cU_{0,n}(q)$ is represented by the $1\times n$-zero matrix and
$\cU_{n,n}(q)$ by the $n\times n$-identity matrix.
We call $\cU_{0,n}(q)$ and $\cU_{n,n}(q)$ the \emph{trivial} and the \emph{free} \qM{} over~$\F_q^n$, respectively.
For $0<k<n$, the uniform \qM{} $\cU_{k,n}(q)$ is representable over $\F_{q^m}$ if and only if $m\geq n$.
Indeed, a matrix $G\in\F_{q^m}^{k\times n}$ represents $\cU_{k,n}(q)$ iff $\rk(GY\T)=k$ for all $Y\in\F_q^{k\times n}$ of rank~$k$.
But this is equivalent to~$G$ generating an MRD code \cite[Thm.~2 and~3]{Gab85}
and such a matrix~$G$ exists if and only if $m\geq n$; see \cite[Rem.~3.10]{Gor21}.
\end{exa}

Unsurprisingly, not every $q$-matroid is representable; see  \cite[Sec.~4]{GLJ22Gen} and   \cite[Sec.~3.3]{CeJu21}.
In the next section we will present examples showing that the direct sum of representable $q$-matroids need not be representable.

Before turning to the direct sum of $q$-matroids, we briefly summarize some properties of the direct sum of (classical) matroids.
We refer to Section~4.2 in the monograph \cite{Ox11} by Oxley for further details.

\begin{rem}\label{R-DirSumClassMatr}
Let $M_i=(S_i,r_i),i=1,2,$ be two matroids on disjoint ground sets~$S_1$ and~$S_2$.
Then $M=(S,r)$, where $S=S_1\cup S_2$ and $r(A)=r_1(A\cap S_1)+r_2(A\cap E_2)$ for all $A\subseteq S$, is a
matroid and called the direct sum of ~$M_1$ and~$M_2$, denoted by $M_1\oplus M_2$.
We denote by $\cI(M_1\oplus M_2)$ and~$\cI(M_i)$ the collections of independent sets in~$M_1\oplus M_2$ and~$M_i$, respectively, and use similar notation for the
collections of  circuits, flats, and open sets (the definitions of these sets are as in \cref{D-qMatroid} where we replace the dimension by the cardinality and the vector space sum by the set-theoretic union).
The following holds true.
\begin{alphalist}
\item The independent sets satisfy $\cI(M_1\oplus M_2)=\{I_1\cup I_2\mid I_i\in\cI(M_i)\}$.
\item The circuits satisfy $\cC(M_1\oplus M_2)=\cC(M_1)\cup\cC(M_2)$.
\item The flats satisfy $\cF(M_1\oplus M_2)=\{F_1\cup F_2\mid F_i\in\cF(M_i)\}$.
\item The open sets satisfy $\cO(M_1\oplus M_2)=\{O_1\cup O_2\mid O_i\in\cO(M_i)\}$.
\item If $M_i$ is represented by the matrices $G_i$ (over the same field~$\F$) for $i=1,2$, then $M_1\oplus M_2$ is represented by the matrix
\begin{equation}\label{e-GMat}
     \begin{pmatrix}G_1&0\\0&G_2\end{pmatrix}.
\end{equation}
\end{alphalist}
\end{rem}

The above implies that the direct sum of matroids can also be defined by way of independent sets, circuits, flats, or open sets (and many more).
It is easy to see that the obvious $q$-analogue of the rank function above or of any of the properties  (a)~--~(d) does not lead to a well-defined notion
for a direct sum of $q$-matroids.
In addition,~(e) does not generalize.
In fact, if~$G_i$ represents the \qM{}~$\cM_i$, then the $q$-matroid represented by~$G$ in \eqref{e-GMat} (in the sense of \cref{T-ReprqMatr})
depends on the choice of the representing matrices~$G_i$.
This is not surprising as the rank of
\[
   \begin{pmatrix}G_1&0\\0&G_2\end{pmatrix}\begin{pmatrix}Y_1\T\\Y_2\T\end{pmatrix}=\begin{pmatrix}G_1Y_1\T\\G_2Y_2\T\end{pmatrix}
\]
depends on the relation between~$G_1$ and~$G_2$.
This has already been observed in \cite[Rem.~22]{CeJu21} and  \cite[Prop.~4.5]{GLJ21C}.
The latter is used in \cite{GLJ21C}  to establish the non-existence of a coproduct in the category of $q$-matroids with strong maps as morphisms.

Fortunately, in \cite{CeJu21} the authors introduce a notion of direct sum of $q$-matroids.
We will present a slightly different definition of the same construction.
Its form will be beneficial for further computations.

Throughout the paper we will use the following notation.

\begin{nota}\label{Nota}
For any direct sum $E=E_1\oplus E_2$ of $\F$-vector spaces~$E_1$ and~$E_2$ we denote by
$\pi_i:E\longrightarrow E_i$  the corresponding projections.
Furthermore, for $n_1,n_2\in\N$ and $n=n_1+n_2$, we set $\F^n=\F^{n_1}\oplus\F^{n_2}$ such that
$\pi_1:\F^n\longrightarrow \F^{n_1}$  and $\pi_2:\F^n\longrightarrow \F^{n_2}$ are the projections onto the first~$n_1$ and
last~$n_2$ coordinates, respectively.
In order to accommodate for the correct matrix sizes, we write $\F^{n_1}\oplus 0$ and $0\oplus\F^{n_2}$ for
$\F^{n_1},\,\F^{n_2}$ considered as subspaces of~$\F^n$.
\end{nota}

\begin{theo}[\mbox{\cite[Sec.~7]{CeJu21}}]\label{T-DirSum}
Let $\cM_i=(E_i,\rho_i),\,i=1,2,$ be \qM{}s and set $E=E_1\oplus E_2$.
Define $\rho'_i:\cL(E)\longrightarrow \N_0,\ V\longmapsto \rho_i(\pi_i(V))$ for $i=1,2$.
Then $\cM'_i=(E,\rho'_i)$ is a \qM{}.
Set
$\cX=\{X\in\cL(E)\mid \rho_1'(X)+\rho_2'(X)<\dim X\}$ and $\cX_0=\cX\cup\{0\}$.
Define
\begin{equation}\label{e-rho}
  \rho:\cL(E)\longrightarrow\N_0,\quad V\longmapsto\dim V+\min_{X\in\cX_0\cap\cL(V)}\big(\rho'_1(X)+\rho'_2(X)-\dim X\big) \ \text{ for } V\in\cL(E).
\end{equation}
Then $\cM:=(E,\rho)$ is a $q$-matroid, called the \emph{direct sum of $\cM_1$ and~$\cM_2$} and denoted by $\cM_1\oplus\cM_2$.
A space~$V$ is dependent in~$\cM$ iff $\cX\cap\cL(V)\neq\emptyset$, and as a consequence the collection of circuits of~$\cM_1\oplus\cM_2$ is given by
$\cC(\cM_1\oplus\cM_2)=\{X\in\cX\mid X \text{ is inclusion-minimal in }\cX\}$.
\end{theo}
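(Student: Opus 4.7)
The plan is to verify axioms (R1)--(R3) for both $\rho'_i$ and $\rho$, and then to read off the claims about dependent spaces and circuits directly from the formula for~$\rho$.

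First I would check that $\cM'_i = (E, \rho'_i)$ is a $q$-matroid. Linearity of $\pi_i$ gives $\pi_i(V+W) = \pi_i(V) + \pi_i(W)$, $\pi_i(V \cap W) \leq \pi_i(V) \cap \pi_i(W)$, and $\dim \pi_i(V) \leq \dim V$. Properties (R1) and (R2) for $\rho'_i$ then follow immediately from the corresponding properties for $\rho_i$, and (R3) combines monotonicity of $\rho_i$ on the intersection term with submodularity of~$\rho_i$.

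The key step is to view $\rho$ through the auxiliary function
\[
  g : \cL(E) \longrightarrow \Z, \quad g(X) := \rho'_1(X) + \rho'_2(X) - \dim X,
\]
which is submodular (a sum of the submodular $\rho'_i$ and the modular $-\dim$) and satisfies $g(0) = 0$. Since $g(X) \geq 0$ for every $X \notin \cX_0$, the minimum in \eqref{e-rho} is unchanged when taken over all of $\cL(V)$, yielding the cleaner formula $\rho(V) = \dim V + \min_{X \in \cL(V)} g(X)$. Property (R1) then follows by testing with $X = 0$ (upper bound) and by noting that $g(X) \geq -\dim X \geq -\dim V$ for $X \leq V$ (lower bound). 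For (R2), given $V \leq W$ and a minimizer $X^*$ of $g$ on $\cL(W)$, I would use $X^* \cap V$ as a test element for $\rho(V)$: then $\rho'_i(X^* \cap V) \leq \rho'_i(X^*)$ and the modular identity $\dim V - \dim(X^* \cap V) = \dim(V + X^*) - \dim X^* \leq \dim W - \dim X^*$ combine to give $\rho(V) \leq \rho(W)$. For (R3), picking minimizers $X_1 \in \cL(V)$ and $X_2 \in \cL(W)$ and observing that $X_1 + X_2 \in \cL(V+W)$ and $X_1 \cap X_2 \in \cL(V \cap W)$, the submodularity of $g$ combined with modularity of $\dim$ yields $\rho(V+W) + \rho(V \cap W) \leq \rho(V) + \rho(W)$ in a single line.

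The dependent-space characterization is then immediate: $V$ is dependent iff $\min_{X \in \cL(V)} g(X) < 0$, iff some $X \leq V$ satisfies $g(X) < 0$, iff $\cX \cap \cL(V) \neq \emptyset$. The circuit characterization follows at once: a circuit $V$ must itself lie in $\cX$ (otherwise a proper subspace of $V$ in $\cX$ would be dependent, contradicting minimality), and minimality in $\cX$ is equivalent to every proper subspace of $V$ being independent. The main obstacle is the bookkeeping for (R2), where one has to produce a good candidate for $\rho(V)$ from the $W$-minimizer, but this becomes a one-line calculation once the reformulation via the submodular function~$g$ is in place.
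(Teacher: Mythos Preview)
Your proof is correct. The paper itself does not prove this theorem in detail: it defers the fact that $\cM'_i$ is a $q$-matroid to \cite[Thm.~5.2]{GLJ21C} and the fact that $\rho$ is a rank function to \cite[Thm.~29]{CeJu21} and \cite[Thm.~3.6]{GLJ22Ind}, only spelling out (as you do) the observation that the minimum over $\cX_0\cap\cL(V)$ agrees with the minimum over all of $\cL(V)$ because $g(0)=0$ and $g(X)\geq 0$ outside~$\cX$. Your self-contained verification via the submodular auxiliary function $g$ is exactly the argument behind those cited results, and your treatment of (R2) using the test space $X^*\cap V$ together with the modular identity $\dim V-\dim(X^*\cap V)=\dim(V+X^*)-\dim X^*\leq\dim W-\dim X^*$ is the standard way to handle this; the dependent-space and circuit characterizations are indeed immediate from the reformulated~$\rho$, as you observe.
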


A short proof showing that~$\cM_i'$ is a \qM{} is given at \cite[Thm.~5.2]{GLJ21C} (see also \cite[Thm.~41]{CeJu21} for the case where $\dim E_j=1$ for $j\neq i$).
The fact that $\rho$ is a rank function can be found in \cite[Thm.~29]{CeJu21} and also follows
from \cite[Thm.~3.6 and its proof]{GLJ22Ind}.
In both references, $\rho(V)$ is defined as
\begin{equation}\label{e-rhoDSAlt}
    \rho(V)=\dim V+\min_{X\leq V}\big(\rho'_1(X)+\rho'_2(X)-\dim X\big),
\end{equation}
but since $\rho'_1(0)+\rho'_2(0)-\dim 0=0$, we can restrict ourselves to taking the minimum over $\cX_0\cap\cL(V)$.

The direct sum has some expected properties; see \cite[Thm.~47, Cor.~48]{CeJu21} or \cite[Thm.~5.6]{GLJ23DSCyc}.

\begin{theo}\label{T-DirSumProp}
Consider the situation from \cref{T-DirSum} and let $V_i\in\cL(E_i)$.
Then $\rho_i(V_i)=\rho'_i(V_i)$ and $\rho'_j(V_i)=0$ if $j\neq i$.
Furthermore, $\rho(V_1\oplus V_2)=\rho_1(V_1)+\rho_2(V_2)$.
As a consequence $\rho(\cM_1\oplus\cM_2)=\rho_1(\cM_1)+\rho_2(\cM_2)$.
\end{theo}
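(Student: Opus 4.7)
The plan is to dispatch the three assertions in order, with the main content being the equality $\rho(V_1\oplus V_2)=\rho_1(V_1)+\rho_2(V_2)$. For the preliminary identities, I would observe that when $V_i\in\cL(E_i)$ is viewed as a subspace of $E=E_1\oplus E_2$ via the natural inclusion, one has $\pi_i(V_i)=V_i$ and $\pi_j(V_i)=0$ for $j\neq i$. The definitions in \cref{T-DirSum} then immediately give $\rho'_i(V_i)=\rho_i(\pi_i(V_i))=\rho_i(V_i)$ and $\rho'_j(V_i)=\rho_j(0)=0$.

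For the main equality I would work with the alternative expression \eqref{e-rhoDSAlt} for $\rho$, which avoids the bookkeeping with the set $\cX_0$. Writing $V=V_1\oplus V_2$, the upper bound $\rho(V)\leq\rho_1(V_1)+\rho_2(V_2)$ is obtained by plugging $X=V$ into \eqref{e-rhoDSAlt} and using the identities just established. The lower bound requires showing $\dim V+\rho'_1(X)+\rho'_2(X)-\dim X\geq\rho_1(V_1)+\rho_2(V_2)$ for every $X\leq V$. Two ingredients combine to deliver this. First, a standard consequence of submodularity: whenever $A\leq B$ in a $q$-matroid, $\rho(B)-\rho(A)\leq\dim B-\dim A$; this follows from (R3) applied to $A$ and a complement of $A$ in $B$, together with (R1). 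Applied to the containments $\pi_i(X)\leq V_i$ (inside the $q$-matroids $\cM_i$), this yields $\rho_i(V_i)-\rho_i(\pi_i(X))\leq\dim V_i-\dim\pi_i(X)$ for $i=1,2$. Second, the map $X\to\pi_1(X)\oplus\pi_2(X),\ x\mapsto(\pi_1(x),\pi_2(x))$, is injective since $E=E_1\oplus E_2$, which gives $\dim X\leq\dim\pi_1(X)+\dim\pi_2(X)$. Summing the two submodularity inequalities and substituting the dimension bound (noting that $\dim V=\dim V_1+\dim V_2$) produces precisely the inequality needed.

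The final assertion about the rank of the direct sum is then just the specialization $V_i=E_i$, giving $\rho(\cM_1\oplus\cM_2)=\rho(E_1\oplus E_2)=\rho_1(E_1)+\rho_2(E_2)=\rho_1(\cM_1)+\rho_2(\cM_2)$. The entire argument is a direct unpacking of the definitions together with the standard semimodular consequence of the rank axioms, and I do not foresee any serious obstacle; the only step worth a sentence of care is the dimension inequality $\dim X\leq\dim\pi_1(X)+\dim\pi_2(X)$, which drives the lower bound.
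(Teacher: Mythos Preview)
Your argument is correct. The paper does not supply its own proof of this theorem; it simply cites \cite[Thm.~47, Cor.~48]{CeJu21} and \cite[Thm.~5.6]{GLJ23DSCyc} for it. Your direct proof from the definition \eqref{e-rhoDSAlt} is clean and complete: the upper bound via $X=V$, the lower bound via the semimodular inequality $\rho(B)-\rho(A)\leq\dim B-\dim A$ (which the paper itself later invokes as \cite[Lem.~2]{CeJu21}) combined with $\dim X\leq\dim\pi_1(X)+\dim\pi_2(X)$, and the preliminary identities all go through exactly as you describe.
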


Note that by the above,~$E_i$ is a loop space of~$\cM'_j$ for $j\neq i$ (by definition a loop space is a space with rank~$0$).
The process from~$\cM_j$ to $\cM'_j$ in \cref{T-DirSum} is called \emph{adding a loop space} in \cite{CeJu21}.
It is a special instance of the direct sum because $\cM'_j=\cM_j\oplus\cU_0(E_i)$ for $i\neq j$, where $\cU_0(E_i)$ is the
trivial $q$-matroid on the ground space $E_i$ (that is, all spaces have rank value~$0$).
The process of adding a loop space can easily be described for representable $q$-matroids.

\begin{rem}\label{R-M1prime}
Let $n_1,n_2\in\N$.
Let $\cM_1=(\F^{n_1},\rho_1)$ be a representable $q$-matroid, say $\cM_1=\cM_G$ for some $G\in\F_{q^m}^{k\times n_1}$.
Define $\cM_1'=(\F^{n_1+n_2},\rho_1')$, where $\rho_1'(V)=\rho_1(\pi_1(V))$ for all $V\in\cL(\F^{n_1+n_2})$. Then
$\cM_1'=\cM_{G'}$, where $G'=(G\mid 0)\in\F^{k\times(n_1+n_2)}$.
\end{rem}

As already alluded to, the independent spaces, flats, etc.\ do not behave well under taking direct sums of \qM{}s.
Only one structure behaves nicely: the cyclic flats.
We briefly mention the following properties. They will not be needed for the rest of the paper, but rather serve to illustrate the contrast
to the direct sum of classical matroids; see \cref{R-DirSumClassMatr}(a)~--(d).
The discrepancy with respect to representability will be discussed in the next section.

\begin{rem}[\mbox{see \cite{GLJ23DSCyc}}]\label{R-DSPropSpaces}
Let $\cM_i=(E_i,\rho_i),\,i=1,2,$
and $\cM=\cM_1\oplus\cM_2$.
As in \cref{R-DirSumClassMatr} we use the notation $\cI(\cM)$ for the collection of independent spaces, and similarly for the flats, circuits, and open spaces. Furthermore, set $\cZ(\cM)=\cF(\cM)\cap\cO(\cM)$. Its elements are called the \emph{cyclic flats} of~$\cM$.
We set $\cI(\cM_1)\oplus\cI(\cM_2)=\{I_1\oplus I_2\mid I_i\in\cI(\cM_i)\}$ and similarly for the other collections.
Then
$\cI(\cM_1)\oplus\cI(\cM_2)\subset\cI(\cM),\ \;  \cF(\cM_1)\oplus\cF(\cM_2)\subset\cF(\cM),\ \; \cO(\cM_1)\oplus\cO(\cM_2)\subset\cO(\cM)$,
and $\cC(\cM_i)\subset\cC(\cM)$ for $i=1,2$. In general equality does not hold in any of these cases, but, on the positive side, we have
$\cZ(\cM_1)\oplus\cZ(\cM_2)=\cZ(\cM).$
\end{rem}

\section{Representability of the Direct Sum}

In this section we turn to representability of the direct sum of representable $q$-matroids.
We will provide an example of $\F_4$-representable $q$-matroids over the ground space $\F_2^4$ for which the direct sum is not
representable over any field extension $\F_{2^m}$.
A crucial ingredient will be paving $q$-matroids defined below.
We show that if $\cM_1,\,\cM_2$ are paving $q$-matroids of the same rank and represented by matrices~$G_i$ satisfying a certain condition,
then the direct sum $\cM_1\oplus\cM_2$ is not represented by the block diagonal matrix $\text{diag}(G_1,G_2)$.
This will be used to create the desired example.
A second example will be provided where the direct sum is representable only over larger fields than the summands.

Note that in the context of representability, it suffices to consider \qM{}s with ground space~$\F^n$.

\begin{defi}\label{D-Pav}
A $q$-matroid $\cM=(\F^n,\rho)$ is \emph{paving} if $\dim C\geq \rho(\cM)$ for all circuits~$C$ of~$\cM$.
\end{defi}

A large class of paving $q$-matroids is given as follows.

\begin{exa}\label{E-Pav}
Let $n\in\N$ and $1\leq k< n$. Let $\cS$ be a collection of $k$-dimensional subspaces of~$\F^n$ with the property that
$\dim(X\cap Y)\leq k-2$ for all distinct $X,Y\in \cS$. For $V\in\cL(\F^n)$ define
\[
    \rho(V)=\left\{\begin{array}{cl}k-1&\text{if }V\in\cS,\\[.5ex] \min\{k,\dim V\}&\text{otherwise.}\end{array}\right.
\]
Then $\cM=(\F^n,\rho)$ is  $q$-matroid of rank~$k$; see \cite[Prop.~4.6]{GLJ22Gen}.
The circuits are the spaces in~$\cS$ and all $(k+1)$-dimensional spaces not containing a space in~$\cS$.
Thus~$\cM$ is paving.
We will make use of this construction later in \cref{E-DSNonRepr1}.
Further examples  can be found in \cite[Sec.~4]{GLJ22Gen}.
\end{exa}

For the following result recall \cref{Nota}.

\begin{prop}\label{P-PavDS}
Let $\cM_i=(\F^{n_i},\rho_i),i=1,2,$ be paving $q$-matroids with $\rho_1(\cM_1)=\rho_2(\cM_2)=k$.
Let $n=n_1+n_2$ and $\cM=(\F^n,\rho)=\cM_1\oplus\cM_2$.
Denote by $\cC_1,\,\cC_2$ and $\cC$ the collections of circuits of~$\cM_1,\,\cM_2$, and~$\cM$, respectively.
Then every $C\in\cC$ has dimension at least~$k$ and
\[
   \{C\in\cC\mid \dim C=k\}
   =\{C_1\oplus 0\mid C_1\in\cC_1,\, \dim C_1=k\}\cup\{0\oplus C_2\mid C_2\in\cC_2,\, \dim C_2=k\}.
\]
\end{prop}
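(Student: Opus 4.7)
The plan is to argue directly from the characterization of circuits in Theorem~\ref{T-DirSum}, namely $\cC=\{X\in\cX\mid X\text{ is inclusion-minimal in }\cX\}$ with $\cX=\{X\in\cL(\F^n)\mid\rho'_1(X)+\rho'_2(X)<\dim X\}$. The key structural fact I will invoke throughout is that in a paving $q$-matroid of rank~$k$, every subspace of dimension strictly less than~$k$ is independent (no such subspace can contain a circuit). When $k\geq 2$ this also means the $q$-matroid has no loops, since a loop would be a $1$-dimensional circuit, in contradiction to the paving condition.

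For the dimension bound, suppose for contradiction that $C\in\cC$ has $\dim C<k$. Then each $\pi_i(C)$ has dimension below~$k$, hence is independent in~$\cM_i$, so $\rho'_i(C)=\dim\pi_i(C)$. Since $C\subseteq\pi_1(C)\oplus\pi_2(C)$, we obtain $\rho'_1(C)+\rho'_2(C)\geq\dim C$, contradicting $C\in\cX$. For the inclusion $\supseteq$ in the set equality, fix $C_1\in\cC_1$ with $\dim C_1=k$; then $\rho_1(C_1)=k-1$ because $C_1$ is dependent and all its proper subspaces are independent by paving. Hence $C_1\oplus 0$ satisfies $\rho'_1+\rho'_2=k-1<k$, so it lies in~$\cX$. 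Any nonzero $W\subsetneq C_1\oplus 0$ has $W\subseteq E_1\oplus 0$ and $\pi_1(W)\subsetneq C_1$ independent in~$\cM_1$, giving $W\notin\cX$. Thus $C_1\oplus 0\in\cC$; the argument for $0\oplus C_2$ is symmetric.

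The substantive part is the reverse inclusion: every $k$-dim $C\in\cC$ equals $C_1\oplus 0$ or $0\oplus C_2$ for a $k$-dim circuit of~$\cM_1$ or~$\cM_2$. Setting $d_i:=\dim\pi_i(C)$, the case $d_1,d_2\leq k-1$ is ruled out by the same estimate as in the dimension bound. So, after swapping the roles of~$\cM_1$ and~$\cM_2$ if needed, $d_1=k$, which makes $\pi_1|_C\colon C\to\pi_1(C)$ a vector space isomorphism. If $\pi_1(C)$ were independent in~$\cM_1$, then $\rho'_1(C)=k$ would again contradict $C\in\cX$, so $\pi_1(C)$ is a $k$-dim dependent subspace; paving then forces the circuit it contains to be all of~$\pi_1(C)$. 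Hence $\pi_1(C)\in\cC_1$, $\rho_1(\pi_1(C))=k-1$, and consequently $\rho'_2(C)=0$, so $\pi_2(C)$ is a loop space of~$\cM_2$.

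The final and main obstacle is to upgrade this to $\pi_2(C)=0$. Assuming $k\geq 2$, I would appeal to the no-loops observation: by monotonicity, every $1$-dim subspace of~$\pi_2(C)$ would have rank~$0$ in~$\cM_2$, hence be a $1$-dim circuit, contradicting the paving condition. Therefore $\pi_2(C)=0$, $C\subseteq E_1\oplus 0$, and $C=\pi_1(C)\oplus 0$ with $\pi_1(C)$ a $k$-dim circuit of~$\cM_1$, as claimed.
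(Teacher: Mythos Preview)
Your argument is correct and mirrors the paper's almost step for step: both use the circuit description from \cref{T-DirSum}, dispose of the small-dimension case by observing that $\dim\pi_i(C)<k$ forces $\pi_i(C)$ to be independent, and for a $k$-dimensional circuit argue that one projection is a $k$-dimensional circuit of its summand while the other projection must be zero. Your explicit caveat $k\geq 2$ is well taken and is exactly what the paper's terse ``$\pi_2(C)=0$ by the paving property of~$\cM_2$'' is using implicitly; for $k=1$ that step genuinely breaks (a rank-$1$ paving $q$-matroid may have loops, and then a $1$-dimensional $C$ with both projections loops lies in~$\cX$ without being of the form $C_1\oplus0$ or $0\oplus C_2$), so your restriction is not a defect of your proof but a hypothesis the proposition tacitly needs.
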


For the proof recall that in any \qM{} $(E,\rho)$ a $k$-dimensional circuit $C$ satisfies $\rho(C)=k-1$.

\begin{proof}
We start with the stated identity. Denote the set on the right hand side by~$\cV$.
\\
``$\supseteq$'' Let $C_1\oplus 0\in\cV$. From \cref{T-DirSumProp} we obtain $\rho(C_1\oplus0)=\rho_1(C_1)=k-1$, and
thus $C_1\oplus0$ is dependent in~$\cM$.
Clearly, every subspace of $C_1\oplus0$ is of the form $I_1\oplus0$ where $I_1$ is independent in $\cM_1$.
Using again \cref{T-DirSumProp} we conclude that $I_1\oplus0$ is independent in~$\cM$ and thus $C_1\oplus0$ is a circuit of~$\cM$ of dimension~$k$.
The same reasoning holds for $0\oplus C_2\in\cV$.
\\
``$\subseteq$''
By \cref{T-DirSum} a subspace $C\in\cL(\F^n)$ is a circuit of~$\cM$ if and only if it is inclusion-minimal subject to
\begin{equation}\label{e-CircCond}
   \rho_1(\pi_1(C))+\rho_2(\pi_2(C))\leq \dim C-1,
\end{equation}
where $\pi_1,\,\pi_2$ are the projections from $\F^n$ to the first $n_1$ and last $n_2$ coordinates, respectively.
Let $C\in\cC$ and dim $C\leq k$.
\\
1) Let $\dim\pi_1(C)=k$ (which implies $\dim C=k$).
Then \eqref{e-CircCond} implies that $\pi_1(C)$ is a dependent space of~$\cM_1$, and thus a circuit thanks to the paving property.
Hence $\rho_1(\pi_1(C))=k-1$ and thus $\rho_2(\pi_2(C))=0$  by \eqref{e-CircCond}.
But then $\pi_2(C)=0$ by the paving property of~$\cM_2$, and thus $C=\pi_1(C)\oplus0$.
In the same way we have $C=0\oplus\pi_2(C)$ if $\dim\pi_2(C)=k$.
\\
2) Let $\dim\pi_i(C)=\ell_i<k$ for $i=1,2$. Then $\pi_i(C)$ is independent in $\cM_i$ and $\rho_i(\pi_i(C))=\ell_i$.
Now we obtain $\dim C\leq\dim\pi_1(C)+\dim\pi_2(C)=\ell_1+\ell_2=\rho_1(\pi_1(C))+\rho_2(\pi_2(C))$ in contradiction to~\eqref{e-CircCond}.
Hence this case does not arise.
This also shows that all circuits have dimension at least~$k$, and the proof is complete.
\end{proof}

We now turn to representability of the direct sum and start with the following unsurprising result.

\begin{theo}\label{T-DSReprMat}
Let $\cM_i=(\F^{n_i},\rho_i),i=1,2,$ be $q$-matroids of rank~$k_i$.
Let $n=n_1+n_2$ and $\cM=\cM_1\oplus\cM_2$.
Suppose~$\cM$ is representable over~$\F_{q^m}$.
Then $\cM_1$ and~$\cM_2$ are representable over~$\F_{q^m}$ and $\cM=\cM_G$ for a matrix~$G$ of the form
\[
    G=\begin{pmatrix}G_1&0\\ 0&G_2\end{pmatrix},
\]
where $G_i\in\F_{q^m}^{k_i\times n_i}$ are such that $\cM_{G_i}=\cM_i$.
\end{theo}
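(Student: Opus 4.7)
The plan is to exploit the special role of the two summand ground spaces $\F^{n_1}\oplus 0$ and $0\oplus\F^{n_2}$, on which the rank function of $\cM=\cM_1\oplus\cM_2$ is completely determined by $\rho_1$ and $\rho_2$ via \cref{T-DirSumProp}. Set $k=k_1+k_2$, which by \cref{T-DirSumProp} equals $\rho(\cM)$, so by hypothesis there is a matrix $H\in\F_{q^m}^{k\times n}$ of rank $k$ with $\cM=\cM_H$. Split $H=(A\mid B)$ with $A\in\F_{q^m}^{k\times n_1}$ and $B\in\F_{q^m}^{k\times n_2}$. Choosing the generator matrix $Y=(I_{n_1}\mid 0)$ of $\F^{n_1}\oplus 0$ and applying \eqref{e-rkGY}, we get $\rk(A)=\rho(\F^{n_1}\oplus 0)=\rho_1(\F^{n_1})=k_1$, and analogously $\rk(B)=k_2$.

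The next step is to find $U\in\GL_k(\F_{q^m})$ bringing $H$ into block diagonal form, using that $\cM_H=\cM_{UH}$. Since $\rk(A)=k_1$ and $\rk(B)=k_2$, their column spaces have dimensions $k_1$ and $k_2$ in $\F_{q^m}^k$, and together they span the column space of $H$, which is all of $\F_{q^m}^k$ because $\rk(H)=k$. Hence these column spaces are complementary. Dually, the left kernels $L_A=\{v:vA=0\}$ and $L_B=\{v:vB=0\}$ have dimensions $k_2$ and $k_1$, and their intersection is the left kernel of $H$, which is $0$. Therefore $\F_{q^m}^k=L_B\oplus L_A$ as a direct sum of row spaces; taking a basis of $L_B$ as the top $k_1$ rows of $U$ and a basis of $L_A$ as the bottom $k_2$ rows yields an invertible $U$ with
\[
   UH=\begin{pmatrix}G_1&0\\0&G_2\end{pmatrix},\quad G_1\in\F_{q^m}^{k_1\times n_1},\ G_2\in\F_{q^m}^{k_2\times n_2},
\]
where $\rk(G_i)=k_i$ because row operations preserve the ranks of $A$ and $B$.

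Finally, to confirm $\cM_{G_i}=\cM_i$, we simply restrict to the summand ground spaces once more. For any $V_1\in\cL(\F^{n_1})$ with generator matrix $Y_1$, the subspace $V_1\oplus 0\in\cL(\F^n)$ is generated by $(Y_1\mid 0)$, so \eqref{e-rkGY} applied to the block diagonal $G$ gives
\[
   \rho(V_1\oplus 0)=\rk\!\begin{pmatrix}G_1Y_1\T\\ 0\end{pmatrix}=\rk(G_1Y_1\T)=\rho_{G_1}(V_1),
\]
while \cref{T-DirSumProp} gives $\rho(V_1\oplus 0)=\rho_1(V_1)$. Hence $\rho_{G_1}=\rho_1$, i.e.\ $\cM_{G_1}=\cM_1$, and the symmetric argument handles $\cM_2$.

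I do not expect a serious obstacle here; the only point requiring a moment's care is the complementarity of the two left kernels, which in turn rests on the rank-additivity of the direct sum (\cref{T-DirSumProp}). Everything else follows mechanically from the definition of $\cM_H$ in \cref{T-ReprqMatr} and the invariance $\cM_H=\cM_{UH}$ under row operations.
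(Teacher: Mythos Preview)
Your proof is correct but takes a genuinely different route from the paper's. The paper first puts the representing matrix into RREF, obtaining a block upper-triangular shape $\begin{pmatrix} G_1 & G' \\ 0 & G_2 \end{pmatrix}$; it then verifies $\cM_1=\cM_{G_1}$ (which forces the block sizes to be $k_1$ and $k_2$) and uses the rank condition $\rho(0\oplus\F^{n_2})=k_2$ together with the RREF pivot structure to conclude $G'=0$. You instead split $H=(A\mid B)$, read off $\rk A=k_1$ and $\rk B=k_2$ from \cref{T-DirSumProp}, deduce that the left kernels $L_A$ and $L_B$ are complementary in $\F_{q^m}^k$, and perform a single change of basis adapted to $L_B\oplus L_A$ to obtain block diagonal form directly. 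Your argument is a bit more conceptual and sidesteps the RREF-specific reasoning about entries above pivots; the paper's is more elementary in that it never mentions left kernels. Both approaches hinge on the same essential input, the rank additivity in \cref{T-DirSumProp}, and your final verification that $\cM_{G_i}=\cM_i$ is identical to the paper's.
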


\begin{proof}
Let $\cM=(\F^n,\rho)$ and suppose $\cM=\cM_G$ for some matrix~$G$ over $\F_{q^m}$.
Since $\rho(\cM)=k:=k_1+k_2$ by \cref{T-DirSumProp}, we may assume that~$G$ is in $\F_{q^m}^{k\times n}$ and has rank~$k$.
Furthermore, without loss of generality let~$G$ be in RREF.
Then we can write~$G$ as
\[
     G=\begin{pmatrix}G_1&G'&\\0&G_2\end{pmatrix}
\]
for some matrices $G_1\in\F_{q^m}^{t_1\times n_1},\,G_2\in\F_{q^m}^{t_2\times n_2}$ of full row rank  and
some matrix $G'\in\F_{q^m}^{t_1\times n_2}$. Hence $t_1+t_2=k_1+k_2$.
We show that $\cM_1=\cM_{G_1}$.
To do so, let $Y\in\F^{\cdotp\times n_1}$ and set $\hat{Y}=(Y\mid 0)$, which is in $\F^{\cdotp\times n}$.
Then $V:=\rowsp(Y)$ is in $\cL(\F^{n_1})$ and $V\oplus0=\rowsp(\hat{Y})$.
With the aid of \eqref{e-rkGY} and \cref{T-DirSumProp} we compute
\[
    \rk(G_1Y\T)= \rk (G\hat{Y}\T)=\rho(\rowsp(\hat{Y}))=\rho(V\oplus0)=\rho_1(V).
\]
This shows that $\cM_1=\cM_{G_1}$.
As a consequence, $t_1=k_1$ and $t_2=k_2$.
Next, $k_2=\rho_2(\F^{n_2})=\rho(0\oplus\F^{n_2})=\rho(\rowsp(0\mid I_{n_2}))=\rk\Smalltwomat{G'}{G_2}$.
Since $\rk G_2=k_2$ this implies
$\rowsp(G')\subseteq\rowsp(G_2)$.
Using that~$G$ is in RREF, we conclude $G'=0$ and hence~$G$ is block diagonal.
In the same way as above we obtain $\cM_2=\cM_{G_2}$.
This concludes the proof.
\end{proof}

We will now make use of some notions that are standard in the theory of rank-metric codes.
The proof of \cref{T-SuppCPerp} below illustrates the well known fact that for a representable
$q$-matroid $\cM_G$ the dimension of a dependent space equals the rank-weight of a suitable codeword in the dual code
$\rowsp(G)^\perp=\ker G$ (where the dual is defined with respect to the standard inner product).

\begin{defi}\label{D-Supp}
For a vector $v=(v_1,\ldots,v_n)\in\F_{q^m}^n$ we define the $\F$-\emph{support} of~$v$ as the subspace
\[
   \supp(v)=\subspace{v_1,\ldots,v_n}_{\F}\leq\F_{q^m}.
\]
The \emph{rank-weight} of~$v$ is $\wtrk(v):=\dim\supp(v)$.
\end{defi}

\begin{theo}\label{T-SuppCPerp}
For $i=1,2$ let $G_i\in\F_{q^m}^{k\times n_i}$ be of rank~$k$ and  $\cM_i=\cM_{G_i}$ be the associated $q$-matroids.
Suppose $\cM_1$ and $\cM_2$ are both paving.
Suppose furthermore that there exist vectors $v_i\in\ker G_i$ such that $\wtrk(v_1)=\wtrk(v_2)=k$
and $\supp(v_1)=\supp(v_2)$.
Then $\cM=\cM_1\oplus\cM_2$ is not represented by
\begin{equation}\label{e-Gdiag}
     G=\begin{pmatrix}G_1&0\\0&G_2\end{pmatrix}.
\end{equation}
\end{theo}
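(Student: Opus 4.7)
The plan is to exhibit a $k$-dimensional subspace $V \leq \F^n$ whose rank in $\cM_G$ differs from its rank in $\cM_1 \oplus \cM_2$: concretely, $V$ will be dependent in $\cM_G$ but independent in $\cM_1 \oplus \cM_2$, which immediately gives $\cM_G \neq \cM_1 \oplus \cM_2$.

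To construct $V$, the first step is to fix an $\F$-basis $\alpha_1, \ldots, \alpha_k$ of the common support $U := \supp(v_1) = \supp(v_2) \leq \F_{q^m}$. Because $\wtrk(v_i) = k$, the expansion $v_i = \sum_{j=1}^k \alpha_j y_j^{(i)}$ with $y_j^{(i)} \in \F^{n_i}$ is unique, and the coefficient matrix $Y_i \in \F^{k \times n_i}$ with rows $y_1^{(i)}, \ldots, y_k^{(i)}$ has full row rank $k$. I then form $Y := (Y_1 \mid Y_2) \in \F^{k \times n}$, which also has rank $k$ since $Y_1$ does, and set $V := \rowsp(Y)$. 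Both restrictions $\pi_i|_V$ are then bijections from $V$ onto the $k$-dimensional spaces $\rowsp(Y_i)$.

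For the $\cM_G$ side, the key observation is that $Y_i\T \alpha\T = v_i\T$ (where $\alpha = (\alpha_1, \ldots, \alpha_k)$), so $G_i Y_i\T \alpha\T = G_i v_i\T = 0$ for $i = 1, 2$; stacking the two blocks gives $GY\T \alpha\T = 0$, and since $\alpha \neq 0$ the block matrix $GY\T$ has nontrivial kernel, whence the rank of $V$ in $\cM_G$ equals $\rk(GY\T) \leq k-1 < \dim V$. For the $\cM_1 \oplus \cM_2$ side, I would invoke \cref{T-DirSum} and check that $\cX \cap \cL(V) = \emptyset$. For any nonzero $X \leq V$ of dimension $d$, injectivity of $\pi_i|_V$ gives $\dim \pi_i(X) = d$; if $d < k$, the paving hypothesis on $\cM_i$ makes $\pi_i(X)$ independent, so $\rho'_1(X) + \rho'_2(X) = 2d \geq d$, while if $d = k$ (so $X = V$ and $\pi_i(V) = \rowsp(Y_i)$ is a $k$-dimensional dependent space, hence a circuit of $\cM_i$ by paving) one has $\rho'_1(V) + \rho'_2(V) = 2(k-1) \geq k$. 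In every case the strict inequality defining $\cX$ fails, so $V$ is independent in $\cM_1 \oplus \cM_2$ and its rank there is $k$.

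The decisive step is the $d = k$ computation: the inequality $2(k - 1) \geq k$ requires $k \geq 2$, which is where the paving hypothesis is used in its full strength (it simultaneously forces proper subspaces of $\pi_i(V)$ to be independent and $\pi_i(V)$ itself to be a circuit of rank $k - 1$). For $k = 1$ the argument would collapse, but a paving rank-one $q$-matroid is degenerate and this case is not needed for the paper's constructions. Once independence in $\cM_1 \oplus \cM_2$ is established alongside dependence in $\cM_G$, comparing the ranks of the single subspace $V$ completes the proof.
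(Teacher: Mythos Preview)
Your proof is correct and follows essentially the same route as the paper: you construct the very same $k$-dimensional space $V=\rowsp(Y_1\mid Y_2)$ and show it is dependent in $\cM_G$ but independent in $\cM_1\oplus\cM_2$. The only organizational difference is that the paper deduces independence of $V$ in $\cM$ via \cref{P-PavDS} (the $k$-dimensional circuits of $\cM$ are all of the form $C_1\oplus0$ or $0\oplus C_2$, so $V$ is not one, and $\cM$ has no smaller circuits), whereas you bypass that proposition and verify $\cX\cap\cL(V)=\emptyset$ directly from \cref{T-DirSum}; your computation for $X\leq V$ with $\dim X<k$ is in effect the relevant piece of the proof of \cref{P-PavDS}. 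Your explicit remark that the step $2(k-1)\geq k$ needs $k\geq2$ is a fair observation; the paper's argument (through \cref{P-PavDS}) carries the same implicit restriction.
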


\begin{proof}
Let $\cN=\cM_G$, that is, $\cN$ is the $q$-matroid generated by~$G$.
We will show that~$\cN$ and~$\cM$ do not have the same circuits of dimension at most~$k$.
\\
First of all, \cref{P-PavDS} implies that all circuits of~$\cM$ have dimension at least~$k$, and the $k$-dimensional ones are also circuits of~$\cN$ thanks to their form described in that proposition.
\\
We will show the existence of a circuit of~$\cN$ of dimension at most~$k$ that is not a circuit of~$\cM$.
To do so, let $\supp(v_1)=\subspace{\alpha_1,\ldots,\alpha_k}$ for some ($\F$-linearly independent) $\alpha_i\in\F_{q^m}$.
Set $\alpha=(\alpha_1,\ldots,\alpha_k)\in\F_{q^m}^k$.
Then there exist matrices $Y_i\in\F^{k\times n_i}$ of rank~$k$ such that
$\alpha Y_i=v_i$ for $i=1,2$.
Hence $0=G_i Y_i\T\alpha\T$, and thus $\rk(G_iY_i\T)<k$ for $i=1,2$.
This shows that  $V_i:=\rowsp(Y_i)\in\cL(\F^{n_i})$ is a dependent space of~$\cM_i$.
Since $\dim V_i=k$, the paving property implies that~$V_i$ is a circuit of~$\cM_i$.
Now we have
\[
    G\begin{pmatrix}Y_1\T\\ Y_2\T\end{pmatrix}\alpha\T=\begin{pmatrix}0\\0\end{pmatrix},
\]
which means that $G(Y_1\mid Y_2)\T$ has rank less than~$k$.
Therefore $W:=\rowsp(Y_1\mid Y_2)\in\cL(\F^{n_1+n_2})$ is a $k$-dimensional dependent space of~$\cN$.
Since~$Y_1$ and~$Y_2$ are both nonzero, $W$ is not a circuit of the direct sum~$\cM$ thanks to \cref{P-PavDS}.
Since~$\cM$ does not have any circuits of dimension less than~$k$, we conclude that~$W$ contains a circuit of~$\cN$ of dimension at most~$k$
that is not a circuit of~$\cM$.
This implies that $\cN\neq\cM$, and~$G$ does not represent~$\cM$.
\end{proof}

Now we are ready to provide an example of a direct sum of representable $q$-matroids that is not representable over any field extension.

\begin{prop}\label{E-DSNonRepr1}
Let $\F=\F_2$ and $\F_4=\{0,1,\omega,\omega+1\}$. Consider the matrix
\[
     G_1=\begin{pmatrix}1&\omega&0&\omega+1\\0&0&1&\omega\end{pmatrix}\in\F_4^{2\times 4}
\]
and set $\cM_1:=\cM_{G_1}=(\F^4,\rho_1)$.
Then $\cM_1\oplus\cM_1$ is not representable over any field extension~$\F_{2^m}$.
\end{prop}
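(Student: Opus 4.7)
Strategy. I combine \cref{T-DSReprMat} with \cref{T-SuppCPerp}. Suppose $\cM_1\oplus\cM_1$ is $\F_{2^m}$-representable. By \cref{T-DSReprMat} the representing matrix has the block-diagonal form $\text{diag}(G',H')$, where $G',H'\in\F_{2^m}^{2\times 4}$ both represent $\cM_1$. A contradiction will come from \cref{T-SuppCPerp}, which requires (a) that $\cM_1$ is paving of rank~$2$, and (b) vectors $v\in\ker G'$, $w\in\ker H'$ of rank-weight~$2$ with $\supp(v)=\supp(w)$.

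For (a), a direct check shows that all $15$ nonzero elements of $\ker G_1\subseteq\F_4^4$ have $\wtrk=2$ with $\F_2$-support $\F_4$; in particular $\ker G_1\cap\F_2^4=\{0\}$, so $\cM_1$ has no loops. Writing each such kernel vector as $v=y_1+\omega y_2$ with $y_1,y_2\in\F_2^4$ and using $\alpha=(1,\omega)$ in the correspondence from the proof of \cref{T-SuppCPerp} associates $v$ with the 2-dim circuit $V=\langle y_1,y_2\rangle$; each such circuit arises from exactly three of its six ordered $\F_2$-bases (corresponding to the three nonzero $\F_4$-scalar multiples of a fixed generator of the 1-dim $\F_4$-kernel of $G_1 Y\T$). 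Hence $\cM_1$ has exactly $15/3=5$ two-dimensional circuits, and a direct enumeration shows they partition $\F_2^4\setminus\{0\}$. Thus $\cM_1$ matches the paving construction of \cref{E-Pav} for this spread, and is paving of rank~$2$.

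The main step is to show that every representation $G'\in\F_{2^m}^{2\times 4}$ of $\cM_1$ has its rowspace defined over $\F_4$. The pair $\{e_1,e_3\}$ is independent in $\cM_1$ (since $\langle e_1,e_3\rangle$ is not one of the five circuits), so the first and third columns of $G'$ are $\F_{2^m}$-independent, and after pre-multiplication by a suitable $U\in\GL_2(\F_{2^m})$ I may assume
\[
G'=\begin{pmatrix}1 & a & 0 & b\\ 0 & c & 1 & d\end{pmatrix}
\]
for some $a,b,c,d\in\F_{2^m}$. Each 2-dim circuit $V_j=\rowsp(Y_j)$ imposes $\det(G'Y_j\T)=0$. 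Writing out these $2\times 2$ determinant equations for the five circuits (of which four are independent) yields in turn $c=0$, $b=1+a$, $d=a$, and $a^2+a+1=0$. The last relation forces $a\in\F_4\subseteq\F_{2^m}$, so $m$ must be even and $G'\in\F_4^{2\times 4}$.

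With this in hand, $\ker G'$ contains the $\F_4$-subspace $D:=\ker G'\cap\F_4^4$ of $\F_4$-dimension~$2$. Any nonzero $v\in D$ has $\supp(v)\subseteq\F_4$, so $\wtrk(v)\in\{1,2\}$; the case $\wtrk(v)=1$ would give $v=\lambda y$ with $\lambda\in\F_4^\ast$ and $y\in\F_2^4\setminus\{0\}$, making $\langle y\rangle$ a loop of $\cM_1$ and contradicting (a). Hence $\wtrk(v)=2$ and $\supp(v)=\F_4$. The same reasoning for $H'$ yields $w\in\ker H'$ with $\wtrk(w)=2$ and $\supp(w)=\F_4=\supp(v)$. \cref{T-SuppCPerp} then shows that $\text{diag}(G',H')$ does not represent $\cM_1\oplus\cM_1$, contradicting \cref{T-DSReprMat}. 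For $m$ odd, $a^2+a+1$ has no root in $\F_{2^m}$, so $\cM_1$ is not $\F_{2^m}$-representable and the conclusion is immediate. The main obstacle is the determinant computation in the third paragraph that pins down the form of $G'$; once that is done, the rest follows mechanically.
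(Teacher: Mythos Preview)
Your proof is correct and follows the same three-step strategy as the paper: verify that $\cM_1$ is paving, show via the circuit determinant constraints that every $\F_{2^m}$-representation of $\cM_1$ is (up to row equivalence) one of two explicit $\F_4$-matrices, and then combine \cref{T-DSReprMat} with \cref{T-SuppCPerp}. The only cosmetic differences are your normalization via columns~$1$ and~$3$ (rather than RREF) and your use of a generic nonzero element of $\ker G'\cap\F_4^4$ in place of the paper's explicit vectors $v=(1,1,\omega,1)$ and $\hat v=(1,1,\omega+1,1)$.
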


\begin{proof}
1) We first show that~$\cM_1$ is of the form as in \cref{E-Pav} and thus paving of rank~$2$.
Set $\cV=\{\rowsp (Y_1),\, \rowsp(Y_2),\, \rowsp(Y_3),\, \rowsp(Y_4),\, \rowsp(Y_5)\}$, where
\begin{equation}\label{e-cVMat}
   Y_1\!=\!\begin{pmatrix}1\!&\!0\!&\!0\!&\!0\\0\!&\!1\!&\!0\!&\!0\end{pmatrix},Y_2\!=\!\begin{pmatrix}1\!&\!0\!&\!1\!&\!1\\0\!&\!1\!&\!0\!&\!1\end{pmatrix},
      Y_3\!=\!\begin{pmatrix}1\!&\!0\!&\!0\!&\!1\\0\!&\!0\!&\!1\!&\!1\end{pmatrix},Y_4\!=\!\begin{pmatrix}0\!&\!1\!&\!1\!&\!0\\0\!&\!0\!&\!0\!&\!1\end{pmatrix},
      Y_5\!=\!\begin{pmatrix}1\!&\!1\!&\!0\!&\!1\\0\!&\!0\!&\!1\!&\!0\end{pmatrix}.
\end{equation}
One can verify, by hand or with for instance SageMath, that
\begin{equation}\label{e-rhoG}
     \rho_1(V)=\left\{\begin{array}{cl} 1,&\text{for }V\in\cV,\\ \min\{2,\dim V\},&\text{otherwise.}\end{array}\right.
\end{equation}
For instance, $\rho_1(V)=1$ for all $1$-dimensional subspaces simply reflects that the columns of~$G_1$ are linearly independent over~$\F_2$. Furthermore, $G_1Y_2\T=\Smallfourmat{\omega}{1}{\omega+1}{\omega}$, which has rank~$1$.
Similarly, the rank of all subspaces can easily be verified.
As a consequence,~$\cM_1$ is of the form as in \cref{E-Pav} and thus paving of rank~$2$.
Clearly, the matrix
\[
     \hat{G}_1=\begin{pmatrix}1&\omega+1&0&\omega\\0&0&1&\omega+1\end{pmatrix},
\]
obtained from~$G$ by replacing the primitive element~$\omega$ by its conjugate~$\omega+1$,
also represents~$\cM_1$.
\\
2) We show that $G_1$ and~$\hat{G}_1$ are the only matrices over any field extension $\F_{2^m},\,m\geq1,$ that represent~$\cM_1$.
To do so, let $m\geq1$ and $H=(h_1,\,h_2,\,h_3,\,h_4)\in\F_{2^m}^{2\times 4}$ be such that $\cM_H=\cM_1$.
Thus $\rk(HY\T)=\rk(G_1Y\T)$ for all matrices $Y\in\F^{\cdotp\times4}$.
Without loss of generality we may assume that $H$ is in RREF.
Clearly $\rk(H)=2$.
Moreover, $h_1,\ldots,h_4$ are linearly independent over~$\F_2$ because~$\cM_1$ has no loops (a loop is a 1-dimensional space of rank 0).
Next, $\rk(HY_1\T)=1=\rk(HY_4\T)$ shows that
$h_2\in\subspace{h_1}_{\F_{2^m}}$ and $h_4\in\subspace{h_2+h_3}_{\F_{2^m}}$.
Hence $h_2$ and $h_4$ are not pivot columns of~$H$. All of this implies that~$H$ must be of the form
\[
   H=\begin{pmatrix}1&\alpha&0&\alpha\beta\\0&0&1&\beta\end{pmatrix}\text{ for some }\alpha,\,\beta\in\F_{2^m}.
\]
The $\F_2$-linear independence of the columns of~$H$ implies that $\alpha,\,\beta\not\in\F_2$.
Next,
\[
    1=\rk(HY_3\T)=\rk\begin{pmatrix}1+\alpha\beta&\alpha\beta\\ \beta&1+\beta\end{pmatrix}
       =\rk\begin{pmatrix}1&\alpha\beta\\1&1+\beta\end{pmatrix},
\]
and this results in $\alpha\beta=1+\beta$. Using this, we continue with
\[
  1=\rk(HY_2\T)=\rk\begin{pmatrix}1+\alpha\beta&\alpha+\alpha\beta\\1+\beta&\beta\end{pmatrix}
    =\rk\begin{pmatrix}\beta&\alpha+1+\beta\\1+\beta&\beta\end{pmatrix}
    =\rk\begin{pmatrix}\beta&\alpha+1\\1+\beta&1\end{pmatrix}.
\]
Using the determinant and $\alpha\beta=1+\beta$ we conclude that $\alpha+\beta=0$, hence $\alpha=\beta$, and with $\alpha\beta=1+\beta$ we arrive at
$\beta^2+\beta+1=0$. Thus $\beta\in\F_4\setminus\F_2$ (and~$m$ is even), and the two choices $\beta\in\{\omega,\omega+1\}$ lead to $H\in\{G_1,\hat{G}_1\}$.
\\
3) Consider now $\cM=\cM_1\oplus\cM_1$.
From 2) and \cref{T-DSReprMat} we know that if $\cM$ is $\F_{2^m}$-representable for some $m\in\N$, then~$m$ is even and any representing matrix is of the form
\[
  G=\begin{pmatrix}G'_1&0\\0&G'_2\end{pmatrix},\ \text{ where }G'_i\in\{G_1,\,\hat{G}_1\}.
\]
Since $v=(1,1,\omega,1)\in\ker G_1,\,\hat{v}=(1,1,\omega+1,1)\in\ker \hat{G}_1$ satisfy $\supp(v)=\supp(\hat{v})=\subspace{1,\omega}$ we may apply
\cref{T-SuppCPerp} and conclude that~$\cM$ is not representable over any field extension $\F_{2^m}$.
\end{proof}

In the following case, the direct sum is not representable over the same field as the summands, but over a field extension.
Recall the uniform \qM{}s from \cref{E-UnifRepr}.

\begin{prop}\label{E-DSNonRepr3}
Let $\cM_1=\cU_{1,2}(q)$.
Then~$\cM_1$ is representable over~$\F_{q^2}$, whereas~$\cM_1\oplus\cM_1$ is representable over $\F_{q^m}$ iff $m\geq4$.
\end{prop}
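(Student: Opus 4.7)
The plan is to use \cref{T-DSReprMat} to reduce any potential representation of $\cM_1\oplus\cM_1$ to block-diagonal form, and then convert the representability question into an explicit $\F_q$-linear independence condition on four elements of $\F_{q^m}$. First I would observe from \cref{E-UnifRepr} (applied with $k=1,\,n=2$) that $\cM_1=\cU_{1,2}(q)$ is $\F_{q^m}$-representable iff $m\geq 2$, and that the RREF representations are exactly the matrices $(1,\alpha)$ with $\alpha\in\F_{q^m}\setminus\F_q$. This settles the first claim. Now suppose $\cM_1\oplus\cM_1$ is $\F_{q^m}$-representable; then \cref{T-DSReprMat} forces the representing matrix (in RREF) to be block-diagonal with blocks of that form, giving
\[
   G=\begin{pmatrix} 1 & \alpha_1 & 0 & 0 \\ 0 & 0 & 1 & \alpha_2 \end{pmatrix}, \quad \alpha_1,\alpha_2\in\F_{q^m}\setminus\F_q.
\]

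The core step is to show that $G$ represents $\cM_1\oplus\cM_1$ if and only if $\{1,\alpha_1,\alpha_2,\alpha_1\alpha_2\}$ is $\F_q$-linearly independent. For this I would first compute the rank function of $\cN:=\cM_1\oplus\cM_1$ from \cref{T-DirSum}: for $\dim V\geq 2$, one has $\rho_\cN(V)=1$ precisely when $V\in\{\F_q^2\oplus 0,\;0\oplus\F_q^2\}$, and $\rho_\cN(V)=2$ otherwise. I would then analyze $\rk(GY^{\sf T})$ for $Y\in\F_q^{2\times 4}$ in RREF by pivot shape. For every pivot shape other than $\{1,2\}$, the $2\times2$ determinant $\det(GY^{\sf T})$ factors as a product of terms $\alpha_i$ and $1+c\alpha_i$ with $c\in\F_q$, all non-zero because $\alpha_i\notin\F_q$ (with the one exception that shape $\{3,4\}$ gives the special subspace $0\oplus\F_q^2$ of rank $1$, handled correctly). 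The decisive case is pivot shape $\{1,2\}$: writing $Y=\left(\begin{smallmatrix} 1 & 0 & a & b \\ 0 & 1 & c & d \end{smallmatrix}\right)$, a direct computation gives $\det(GY^{\sf T})=c-a\alpha_1+d\alpha_2-b\alpha_1\alpha_2$, and requiring this to be non-zero for all $(a,b,c,d)\in\F_q^4\setminus\{0\}$ is exactly the $\F_q$-linear independence of $\{1,\alpha_1,\alpha_2,\alpha_1\alpha_2\}$. The ranks on $3$- and $4$-dimensional subspaces then follow automatically from $\rk(G)=2$ and the observation that every $3$-dimensional subspace contains some $2$-dimensional subspace outside the special pair.

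Finally, since $\F_{q^m}$ has $\F_q$-dimension $m$, four $\F_q$-linearly independent elements can exist only when $m\geq 4$, yielding necessity. For sufficiency, given any $m\geq 4$ I would pick a primitive element $\zeta$ of $\F_{q^m}$ and set $\alpha_1=\zeta,\;\alpha_2=\zeta^2$; then $\{1,\zeta,\zeta^2,\zeta^3\}$ is contained in the $\F_q$-basis $\{1,\zeta,\ldots,\zeta^{m-1}\}$ of $\F_{q^m}$ and hence $\F_q$-linearly independent, while $\zeta,\zeta^2\notin\F_q$ for $m\geq 2$. The main obstacle is the case analysis by pivot shape, but only shape $\{1,2\}$ contributes a non-trivial constraint; this isolates the quadratic term $\alpha_1\alpha_2$ that drives the jump from $m\geq 2$ (representability of $\cM_1$) to $m\geq 4$ (representability of $\cM_1\oplus\cM_1$).
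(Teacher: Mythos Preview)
Your proof is correct and structurally parallel to the paper's: both reduce to block-diagonal form via \cref{T-DSReprMat}, compute the rank function of $\cM_1\oplus\cM_1$, and then do a case analysis on the RREF of the $2\times 4$ matrix~$Y$. The genuine difference is that you extract a single clean criterion --- $G$ represents the direct sum iff $\{1,\alpha_1,\alpha_2,\alpha_1\alpha_2\}$ is $\F_q$-linearly independent --- and use it to handle necessity and sufficiency simultaneously. The paper instead treats $m=2$ and $m=3$ separately by writing $\gamma$ in the $\F_q$-basis $\{1,\beta,\ldots\}$ and exhibiting an explicit bad~$Y$ in each case, and then for $m\geq 4$ runs the RREF case analysis directly for the specific choice $\alpha_1=z,\ \alpha_2=z^2$ with $z$ of degree $\geq 4$. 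Your formulation is tidier because the pivot-shape-$\{1,2\}$ determinant $c-a\alpha_1+d\alpha_2-b\alpha_1\alpha_2$ already encodes the full obstruction, so the dimension bound $m\geq 4$ drops out immediately and the ad-hoc constructions for $m=2,3$ become unnecessary; the paper's approach, on the other hand, makes the failure for small~$m$ completely explicit. One small imprecision: your remark ``$\zeta,\zeta^2\notin\F_q$ for $m\geq 2$'' is not literally true for $\zeta^2$ when $m=2$, but since you are in the case $m\geq 4$ (where $\deg_{\F_q}\zeta=m\geq 4$ forces $\zeta^2\notin\F_q$), this is harmless --- and in fact the linear independence of $\{1,\alpha_1,\alpha_2,\alpha_1\alpha_2\}$ already implies $\alpha_1,\alpha_2\notin\F_q$.
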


\begin{proof}
We know already from \cref{E-UnifRepr} that~$\cM_1$ is representable over $\F_{q^2}$.
Even more, for any $m\geq 2$ any matrix $G=\big(1\ \alpha\big)$ with $\alpha\in\F_{q^m}\setminus\F_q$ represents~$\cM_1$.
Consider now $\cM=\cM_1\oplus\cM_1=(\F^4,\rho)$.
With the aid of the very definition of the direct sum one easily verifies that $\rho(V)=1$ for all $1$-dimensional spaces $V\leq\F^4$, while
for the $2$-dimensional spaces
\begin{equation}\label{e-rhotilde2}
   \rho(V)=\left\{\begin{array}{cl} 1,&\text{if }V=\subspace{e_1,e_2}\text{ or }V=\subspace{e_3,e_4},\\ 2,&\text{otherwise,}\end{array}\right.
\end{equation}
where $e_1,\ldots,e_4$ are the standard basis vectors of~$\F^4$.
Suppose~$\cM$ is representable over $\F_{q^m}$.
\cref{T-DSReprMat} tells us that~$\cM$ has a representing matrix of the form
\[
  G=\begin{pmatrix}1&\beta&0&0\\0&0&1&\gamma\end{pmatrix} \ \text{ for some }\beta,\gamma\in\F_{q^m}\setminus\F.
\]
1) Let $m=2$. Then $\F_{q^2}=\subspace{1,\beta}$. Thus $\gamma=a+b\beta$ for some $a,b\in\F$.
Set
\[
     V=\rowsp(Y), \text{ where }Y=\begin{pmatrix}a&b&0&1\\1&0&1&0\end{pmatrix}.
\]
Then $\dim V=2$ and $\rk(GY\T)=1$. This contradicts \eqref{e-rhotilde2}, and thus~$G$ does not represent~$\cM$.
\\
2) Let $m=3$. Then $\F_{q^3}=\subspace{1,\beta,\beta^2}$ and
\[
  \gamma=c_0+c_1\beta+c_2\beta^2,\quad \beta^3=b_0+b_1\beta+b_2\beta^2\text{ for some }b_i,\,c_i\in\F.
\]
This implies $\beta\gamma=c_2b_0+(c_0+c_2b_1)\beta+(c_1+c_2b_2)\beta^2$.
Now we have the following cases:
\\
i) If $c_2=0$, then $\rk(GY\T)=1$ for
\[
    Y=\begin{pmatrix}c_0&c_1&0&1\\1&0&1&0\end{pmatrix}.
\]
ii) If $c_2\neq0=c_1+c_2b_2$, then $\rk(GY\T)=1$ for
\[
    Y=\begin{pmatrix}c_2b_0&c_0+c_2b_1&0&1\\0&1&1&0\end{pmatrix}.
\]
iii) If $c_2\neq0\neq c_1+c_2b_2$, we have $(c_1+c_2b_2)\gamma-c_2\beta\gamma=f_0+f_1\beta$ for some $f_i\in\F$ and thus
$\rk(GY\T)=1$ for
\[
    Y=\begin{pmatrix}f_0&f_1&0&1\\c_1+c_2b_2&-c_2&1&0\end{pmatrix}.
\]
In all cases we obtain a contradiction to \eqref{e-rhotilde2} and conclude that
$\cM$ is not representable over $\F_{q^3}$.
\\
3) Let $m\geq 4$. We show that~$\cM$ is represented by the matrix
\[
    G=\begin{pmatrix}1&z&0&0\\0&0&1&z^2\end{pmatrix} \ \text{ for any $z\in\F_{q^m}$ of degree at least~$4$}.
\]
Denote by $\cD(\cM)$ and $\cD(\cM_G)$ the collections of dependent spaces of~$\cM$ and~$\cM_G$, respectively.
We will show that
\begin{equation}\label{e-DepSpacestildeG}
    \cD(\cM_{G})=\cD(\cM).
\end{equation}
To do so, we first determine the set $\cX=\{X\in\cL(\F^4)\mid \rho_1(\pi_1(X))+\rho_1(\pi_2(X))<\dim X\}$ from  \cref{T-DirSum}.
Since $\cM_1$ is the uniform $q$-matroid of rank~$1$, we have
$\rho_1(\pi_1(X))+\rho_1(\pi_2(X))=\min\{1,\dim\pi_1(X)\}+\min\{1,\dim\pi_2(X)\}\leq 2$ for all~$X\in\cL(\F^4)$,
which together with \eqref{e-rhotilde2} implies
\[
    \cX=\{X\leq\F^4\mid \dim X\geq 3\}\cup\{\subspace{e_1,e_2},\subspace{e_3,e_4}\}.
\]
The form of~$\cX$ shows that every subspace of $\F^4$ containing a space in~$\cX$ is itself in~$\cX$.
Now it follows from \cref{T-DirSum} that
\[
      \cX=\cD(\cM).
\]
We now turn to the $q$-matroid $\cM_{G}=(\F^4,\hat{\rho})$.
By definition $\hat{\rho}(\rowsp(Y))=\rk(GY\T)$ for any matrix $Y\in\F^{\cdotp\times 4}$.
Set $G'_1=(1\ \; z)$ and $G'_2=(1\ \; z^2)$, thus
\[
  G=\begin{pmatrix}G'_1&0\\0&G'_2\end{pmatrix}.
\]
Since both~$G'_1$ and $G'_2$ represent the $q$-matroid~$\cM_1$, any matrix $Y=(Y_1\mid Y_2)\in\F^{\cdotp\times 4}$ satisfies
\[
   \rk(G'_i Y_i\T)=\rho_1(\pi_i(\rowsp(Y))).
\]
Let now $Y=(Y_1\mid Y_2)\in\F^{\cdotp\times 4}$ be such that $\rowsp(Y)\in\cD(\cM)=\cX$. Then
\[
   \hat{\rho}(\rowsp(Y))=\rk(G Y\T)\leq \rk(G'_1Y_1\T)+\rk(G'_2Y_2\T)=\rho_1(\pi_1(\rowsp(Y)))+\rho_1(\pi_2(\rowsp(Y)))<\rk Y.
 \]
This shows that $\cD(\cM)\subseteq \cD(\cM_{G})$.
This can also be concluded from the coproduct property of $\cM\oplus\cM$; see \cite[Thm.~5.5]{GLJ21C}.
It remains to show the converse containment.
Let now $Y\in\F^{t\times 4}$ be a matrix of rank~$t$ such that $\rowsp(Y)\in \cD(\cM_G)$.
Thus
\begin{equation}\label{e-GtY}
    \rk(G Y\T)<t.
\end{equation}
We will show that $\rowsp(Y)$ is in~$\cX$.
For $t\geq 3$ every~$Y$ satisfies \eqref{e-GtY} and~$\rowsp(Y)$ is contained in~$\cX$, while for $t=1$ no matrix~$Y$ satisfies \eqref{e-GtY}.
It remains to consider the case $t=2$.
Let
\[
   Y=\begin{pmatrix}a_1&b_1&c_1&d_1\\a_2&b_2&c_2&d_2\end{pmatrix}
\]
be of rank~$2$ and satisfying \eqref{e-GtY}. Without loss of generality let~$Y$ be in RREF.
We compute
\[
   GY\T=\begin{pmatrix}a_1+b_1z&a_2+b_2z\\c_1+d_1z^2&c_2+d_2z^2\end{pmatrix}
\]
and
\[
   \det(GY\T)=(a_1c_2-a_2c_1)+(b_1c_2-b_2c_1)z+(a_1d_2-a_2d_1)z^2+(b_1d_2-b_2d_1)z^3.
\]
Recall that $1,z,z^2,z^3$ are linearly independent over~$\F_q$ by choice of~$z$.
Thanks to the RREF of~$Y$ we only have to consider the following cases.
\\
i) If $a_1=b_1=a_2=b_2=0$, then  $\rowsp(Y)=\subspace{e_3,e_4}$ is in~$\cX$.
\\
ii) Let $a_1=1,a_2=0,b_1=0,b_2=1$. Then  $\det(GY\T)=c_2-c_1z+d_2z^2-d_1z^3$  and \eqref{e-GtY} implies $c_1=c_2=d_1=d_2=0$.
Thus $\rowsp(Y)=\subspace{e_1,e_2}$, which is in $\cX$.
\\
iii) Let $a_1=1$ and $a_2=b_2=0$. Then  $\det(G Y\T)=c_2+b_1c_2z+d_2z^2+b_1d_2z^3$ and \eqref{e-GtY} implies $c_2=d_2=0$.
But this contradicts that~$Y$ has rank~$2$ and thus this case does not occur.
\\
iv) Let $a_1=a_2=0=b_2$ and $b_1=1$. Then  $\det(G Y\T)=c_2z+d_2z^3$ and  \eqref{e-GtY} implies  $c_2=d_2=0$, which contradicts $\rk Y=2$.
Hence this case does not occur either.
\\
All of this establishes~\eqref{e-DepSpacestildeG}.
Thanks to \cref{R-Crypto}, more specifically \cite[Cor.~73]{BCJ22}, we arrive at $\cM=\cM_G$, and this shows the $\F_{q^m}$-representability of~$\cM$.
\end{proof}

Thus far we provided two examples. In the first one, the direct sum of representable \qM{}s is not representable, and in the second one it requires
a larger field size for a representation than the summands.
In fact, almost all examples that we were able to compute fall into one of the two categories.
The only examples known to us where the direct sum is representable over the same field as both summands, are such that one
summand is representable over~$\F_q$.
While we are not able to prove a general statement about these specific direct sums, we can discuss two extreme cases.
If $\cM_1$ is the trivial \qM{} $\cU_{0,n_1}(q)$ we have seen already in \cref{R-M1prime} and the paragraph preceding it, that
$\cM_1\oplus\cM_2$ is representable over the same field as~$\cM_2$.
The other extreme, where~$\cM_1$ is the free \qM{}, is dealt with in the following proposition.

\begin{prop}
Let $\cM_1=\cU_{n_1,n_1}(q)$, which is representable by $I_{n_1}$, and let the \qM{} $\cM_2=(\F_q^{n_2},\rho_2)$ be represented by $G_2\in\F_{q^{m}}^{k_2\times n_2}$.
Then $\cM_1\oplus\cM_2$ is representable over~$\F_{q^{m}}$ and a representing matrix is given by
\[
     G= \begin{pmatrix}I_{n_1}&0\\0&G_2\end{pmatrix}.
\]
\end{prop}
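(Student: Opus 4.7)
The plan is to prove $\cM_G=\cM_1\oplus\cM_2$ by showing that both $q$-matroids have the same collection of independent spaces and invoking the cryptomorphism recalled in \cref{R-Crypto}. In both cases, I expect independence of $V\in\cL(\F^{n_1+n_2})$ to be equivalent to $W_V:=V\cap(0\oplus\F^{n_2})$ being independent in~$\cM_2$.

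To compute the rank function of $\cM_G$, I would first put a representative of~$V$ in convenient block form. Setting $r_1=\dim\pi_1(V)$, so that $\dim V=r_1+\dim W_V$, I choose a basis of~$V$ obtained by extending a basis of $W_V$; this produces a matrix
\[
    Y=\begin{pmatrix}X & W\\ 0 & Z\end{pmatrix}
\]
with $\rowsp(Y)=V$, where $X\in\F^{r_1\times n_1}$ has rank $r_1$ and $\rowsp(Z)=W_V$ under the identification of $0\oplus\F^{n_2}$ with $\F^{n_2}$. Then $GY\T$ is block lower triangular with diagonal blocks $X\T$ and $G_2Z\T$. Because $X\T$ has full column rank over~$\F$, it admits a left inverse $L\in\F^{r_1\times n_1}$; the choice $P=G_2W\T L$ gives an $\F_{q^m}$-row operation that clears the block $G_2W\T$ without changing the block $G_2Z\T$. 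The resulting block-diagonal matrix has rank $r_1+\rk(G_2Z\T)=\dim\pi_1(V)+\rho_2(W_V)$, so $V$ is independent in $\cM_G$ iff $\rho_2(W_V)=\dim W_V$, iff $W_V$ is independent in~$\cM_2$.

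For the direct sum side, I would use \cref{T-DirSum}: $V$ is dependent in $\cM_1\oplus\cM_2$ iff there exists $X\leq V$ with $\rho'_1(X)+\rho'_2(X)<\dim X$. Since $\cM_1$ is free, $\rho'_1(X)=\dim\pi_1(X)$, and writing $W_X=X\cap(0\oplus\F^{n_2})$ one has $\dim X=\dim\pi_1(X)+\dim W_X$; the inequality collapses to $\rho_2(\pi_2(X))<\dim W_X$. If $W_V$ is independent in~$\cM_2$, then every $W_X\leq W_V$ is also independent (a standard consequence of submodularity), and $\rho_2(\pi_2(X))\geq\rho_2(W_X)=\dim W_X$ rules out any such $X$; thus $V$ is independent. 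Conversely, if $W_V$ is dependent, then $X=W_V\leq V$ itself witnesses the condition. This matches the criterion already obtained for $\cM_G$ and completes the identification $\cM_G=\cM_1\oplus\cM_2$.

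The only step that requires attention is the block elimination in the computation of $\rk(GY\T)$: it relies on $X\T\in\F^{n_1\times r_1}$ having a left inverse over~$\F$, which follows from its full column rank there, and on the resulting $\F_{q^m}$-row operation leaving the lower-right block $G_2Z\T$ undisturbed thanks to the zero in the upper-right block of~$Y$. Everything else is direct unpacking of the definitions in \cref{T-DirSum} and \cref{T-ReprqMatr}.
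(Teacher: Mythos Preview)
Your proof is correct. Both you and the paper begin the same way: put a representing matrix~$Y$ of~$V$ in block-triangular form and use the full column rank of the upper-left block to obtain
\[
   \rho_G(V)=\dim\pi_1(V)+\rho_2\big(\pi_2(V\cap(0\oplus\F^{n_2}))\big)=\dim\pi_1(V)+\rho_2(W_V).
\]
From there the two arguments diverge. The paper stays at the level of rank functions: it evaluates the minimization in \eqref{e-rhoDSAlt} over all $X\leq V$, shows (using the monotonicity of $\dim-\rho$ from \cite[Lem.~2]{CeJu21}) that the minimum is achieved at $X=\rowsp(0\mid Y_2)$, and concludes $\rho(V)=\rho_G(V)$ pointwise. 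You instead compare the collections of independent spaces, exploiting the circuit/dependence criterion in \cref{T-DirSum} directly: because $\cM_1$ is free, the inequality $\rho_1'(X)+\rho_2'(X)<\dim X$ collapses to $\rho_2(\pi_2(X))<\dim W_X$, and you dispose of this with the monotonicity of~$\rho_2$ and the heredity of independence. Your route sidesteps the minimization and the auxiliary lemma entirely, at the price of invoking the independent-space cryptomorphism at the end; the paper's route yields the stronger statement that the two rank functions agree on every subspace, not just that they have the same independent spaces, but of course these are equivalent by \cref{R-Crypto}.
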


\begin{proof}
Set $n=n_1+n_2$. We first consider the \qM{} $\cM_G$ and determine its rank function, which we denote by~$\rho_G$.
Let $V\in\cL(\F^n)$ and $V=\rowsp(Y)$ where $Y\in\F^{\ell\times n}$ of rank~$\ell$.
Without loss of generality, we may assume that~$Y$ is in reduced row echelon.
Then~$Y$ partitions as
\begin{equation}\label{e-Y}
   Y=\begin{pmatrix}Y_1&\hat{Y}\\0&Y_2\end{pmatrix},
\end{equation}
for some $Y_i\in\F^{\ell_i\times n_i}$ and $\hat{Y}$ of corresponding size, and where $\rk(Y_i)=\ell_i$ for $i=1,2$.
This means that $\rowsp(Y_1)=\pi_1(V)$, which has dimension~$\ell_1$, and
$\rowsp(Y_2)=\pi_2(V\cap (0\oplus\F^{n_2}))$, which has dimension~$\ell_2$.
Now we have
\[
   GY\T=\begin{pmatrix}Y_1\T&0\\G_2\hat{Y}\T& G_2 Y_2\T\end{pmatrix},
\]
and since $Y_1\T$ has full column rank, we obtain the rank value
\begin{equation}\label{e-rhoDSG}
  \rho_G(V)=\rk(G Y\T)=\rk(V_1)+\rk(G_2 Y_2\T)=\dim\pi_1(V)+\rho_2(\pi_2(V\cap (0\oplus\F^{n_2}))).
\end{equation}
We now turn to  $\cM_1\oplus\cM_2$ and denote its rank function by~$\rho$.
We use \eqref{e-rhoDSAlt} to evaluate~$\rho$, thus
\begin{equation}\label{e-rhoDS}
    \rho(V)=\dim V+\min_{X\leq V}\sigma(X), \ \text{ where }\sigma(X)= \dim \pi_1(X)+\rho_2(\pi_2(X))-\dim X.
\end{equation}
Consider again $V=\rowsp(Y)$ with~$Y$ as  in\eqref{e-Y}.
Let $X$ be a subspace of~$V$ of dimension~$x$. Then~$X$ is of the form $X=\rowsp(SY)$ for some $S\in\F^{x\times\ell}$ of rank~$x$.
Again, we may assume~$S$ in reduced row echelon form and can partition the matrix as
\[
    S=\begin{pmatrix}S_1&\hat{S}\\0&S_2\end{pmatrix},
\]
where $S_i\in\F^{x_i\times\ell_i}$ of rank~$x_i$ and $\hat{S}$ accordingly.
Then
\[
   SY=\begin{pmatrix}S_1 Y_1&S_1\hat{Y}+\hat{S}Y_2\\ 0&S_2 Y_2\end{pmatrix}.
\]
The two diagonal blocks have full row rank and $\pi_1(X)=\rowsp(S_1Y_1)$ while $\pi_2(X)=\rowsp\Smalltwomat{S_1\hat{Y}+\hat{S}Y_2}{S_2 Y_2}$.
Now we have
\[
  \sigma(X)=\rk(S_1 Y_1)+\rho_2(\pi_2(X))-\dim X=\rho_2(\pi_2(X))-\rk(S_2 Y_2),
\]
which we want to minimize over all subspaces~$X$ of~$V$.
Note that this expression does not depend on~$\pi_1(X)$.
Furthermore, since $\rk(S_2Y_2)$ does not depend on~$\hat{S}$ and~$\rho_2$ is non-decreasing, the minimum is attained by a space $X$ of the form $X=\rowsp(0\mid S_2 Y_2)$.
Thus we arrive at
\[
    \sigma(X)=\rho_2(\pi_2(X))-\dim(\pi_2(X)),
\]
which we have to minimize over all subspaces $\pi_2(X)$ of $\rowsp(Y_2)$.
It is easy to see that the rank function~$\rho$ of any \qM{}  satisfies $\rho(V)-\dim V\leq\rho(W)-\dim W$ whenever $W\leq V$
(see for instance \cite[Lem.~2]{CeJu21}), and thus we conclude that the minimum is attained by a subspace~$X$ such that $\pi_2(X)=\rowsp(Y_2)$.
Choosing $X=\rowsp(0\mid Y_2)$, we may rewrite \eqref{e-rhoDS} as
\[
   \rho(V)=\dim V+\rho_2(\rowsp(Y_2))-\dim\rowsp(Y_2)=\rk(Y_1)+\rho_2(\rowsp(Y_2)),
\]
which agrees with \eqref{e-rhoDSG}.
All of this shows that $\cM_1\oplus\cM_2=\cM_G$, as stated.
\end{proof}

The following summarizes the open problems concerning representability of the direct sum.

\begin{OpenP}
Let $\cM_i=(\F_q^{n_i},\rho_i),\, i=1,2$, be representable \qM{}s. Let $\F_{q^{\widehat{m}}}$ be the smallest field over which both~$\cM_1,\,\cM_2$ are  representable.
\begin{alphalist}
\item Can one characterize representability of $\cM_1\oplus\cM_2$ in terms of the summands?
\item If $\cM_1\oplus\cM_2$ is representable, what is the smallest degree~$m$ such that  $\cM_1\oplus\cM_2$ is representable over~$\F_{q^m}$?
         Under which conditions do we have $m=\widehat{m}$?
\end{alphalist}
\end{OpenP}

We believe that studying representability of \qM{}s, especially finding obstructions to representability, will be a challenging, but highly instructive topic for a
better understanding of \qM{}s and their differences to matroids.
A characterization of representability of the direct sum in terms of the summands may be a first step in this direction.
Note that \cref{T-SuppCPerp} provides already an obstruction to representability in a special case.

\bibliographystyle{abbrv}

\end{document}